\DeclareSymbolFont{SY}{U}{psy}{m}{n}
\DeclareMathSymbol{\emptyset}{\mathord}{SY}{'306}
\newcommand{\slv}{{\textsl{v}}}
\renewcommand{\eqref}[1]{{\rm(\ref{#1})}}
\newcommand{\bbR}{{\mathbb R}}
\newcommand{\cB}{{\mathcal B}}
\newcommand{\cG}{{\mathcal G}}
\newcommand{\conv}{\mathop{\rm conv}}
\newcommand{\rl}{{\mathit{\,\,l}}}
\newcommand{\rr}{{\mathit{\,r}}}
\newcommand{\sE}{{\sf E}}
\newcommand{\fA}{\mathfrak{A}}
\newcommand{\fH}{\mathfrak{H}}
\newcommand{\fK}{\mathfrak{K}}
\newcommand{\fL}{\mathfrak{L}}
\newcommand{\fM}{\mathfrak{M}}
\newcommand{\fN}{\mathfrak{N}}
\newcommand{\fP}{\mathfrak{P}}
\newcommand{\fQ}{\mathfrak{Q}}
\newcommand{\dist}{\mathop{\rm dist}}
\newcommand{\lal}{{\langle}}
\newcommand{\ral}{{\rangle}}
\newcommand{\be}{\begin{equation}}
\newcommand{\ee}{\end{equation}}
 \DeclareMathOperator{\spec}{spec}
\newcommand{\Ran}{\mathop{\mathrm{Ran}}}
\newcommand{\Dom}{\mathop{\mathrm{Dom}}}
\newcommand{\Ker}{\mathop{\mathrm{Ker}}}
\numberwithin{equation}{section}
\newtheorem{introtheorem}{Theorem}
\newtheorem{theorem}{Theorem}[section]
\newtheorem{lemma}[theorem]{Lemma}
\newtheorem{hypothesis}[theorem]{Hypothesis}
\theoremstyle{definition}
\theoremstyle{remark}
{\it}{\rm}
\newtheorem{remark}[theorem]{Remark}
\begin{document}
%%%%%%%%%%%%%%%%%%%%%%%%%%%%%%%%%%%%%%%%%%%%%%%%%%%%%%%%%%%%%%%%%

\title
[An alternative proof of the a priori $\textrm{TAN}\,\,\Theta$
Theorem] {An alternative proof of the a priori $\textrm{\bf
TAN}\,\boldmath{\Theta}$ Theorem}

\author
[A.\,K.\,Motovilov] {Alexander K. Motovilov}

\address{A.K.Motovilov, Bogoliubov Laboratory of Theoretical Physics, JINR\\
Joliot-Curie 6, 141980 Dubna, Moscow Region, Russia}
\email{motovilv@theor.jinr.ru}

%\date{10 июля 2015 г.}

%Ключевые слова и фразы
\keywords{Perturbation problem for spectral subspaces,
operator Riccati equation, $\tan\theta$ theorem}%Это ключевые слова

%%%%%%%%%%%%%%%%%%%%%%%%%%%%%%%%%%%%%%%%%%%%%%%%%%%%%%%%
\begin{abstract}
\small

Let $A$ be a self-adjoint operator in a separable Hilbert space.
Suppose that the spectrum of $A$ is formed of two isolated
components $\sigma_0$ and $\sigma_1$ such that the set $\sigma_0$
lies in a finite gap of the set $\sigma_1$. Assume that $V$ is a
bounded additive self-adjoint perturbation of $A$, off-diagonal with respect
to the partition \mbox{$\spec(A)=\sigma_0\cup\sigma_1$}. It is known
that if \mbox{$\|V\|<\sqrt{2}\,\,\dist(\sigma_0,\sigma_1)$}, then
the spectrum of the perturbed operator $L=A+V$ consists of two
disjoint parts $\omega_0$ and $\omega_1$ which originate from the
corresponding initial spectral subsets $\sigma_0$ and $\sigma_1$.
Moreover, for the difference of the spectral projections
$\sE_A(\sigma_0)$ and $\sE_{L}(\omega_0)$ of $A$ and $L$ associated
with the spectral sets $\sigma_0$ and $\omega_0$, respectively, the
following sharp norm bound holds:
$$
\|\sE_A(\sigma_0)-\sE_{L}(\omega_0)\| \leq
\sin\left(\arctan\frac{\|V\|}{\dist(\sigma_0,\sigma_1)}\right).
$$
In the present note, we give a new proof of this bound for
\mbox{$\|V\|<\dist(\sigma_0,\sigma_1)$}.

\end{abstract}
%%%%%%%%%%%%%%%%%%%%%%%%%%%%%%%%%%%%%%%%%%%%%%%%%%%%%

\maketitle

\bigskip

\hfill\textit{Dedicated to the memory of my university advisor
Stanislav Petrovich Merkuriev}

\bigskip

\bigskip

%%%%%%%%%%%%%%%%%%%%%%%%%%%%%%%%%%%%%%%%%%%%%%%%%%%%%
\section{Introduction}
\label{SIntro}
%%%%%%%%%%%%%%%%%%%%%%%%%%%%%%%%%%%%%%%%%%%%%%%%%%%%%%%

One of fundamental problems in perturbation theory of linear
operators consists in the study of variation of invariant and, in
particular, spectral subspaces under an additive perturbation  (see,
e.g., \cite{BDM1983,DK70,K}). In perturbation theory of self-adjoint
operators, the classical trigonometric estimates on the rotation of
spectral subspaces have been established by Davis and Kahan
\cite{DK70}. As regards the history of the subject and surveys on
other known subspace variation bounds in the self-adjoint
perturbation problem, we refer the reader, e.g., to
\cite{AM2014,AM2013,SeelPhD}.

The author's interest to the subspace perturbation theory arose due
to a series of works performed with a very immediate participation
of S.P.Merkuriev and devoted to the construction of three-body
Hamiltoninas with pairwise interactions depending on the energies of
two-body subsystems (in particular, see papers
\cite{YaF1988,JMP1990} and references therein). The attempts
\cite{M-JMP1995,MotJMPh91} to answer the question
\cite{McKellarMcCay} on the possibility to replace, in the two-body
Schr\"odinger equation, a pairwise energy-dependent potential by a
spectrally  equivalent (i.e. keeping the original spectrum and
original eigenfunctions) energy-independent potential led the author
to a study of the solvability of an operator Riccati equation.
Actually, the mentioned question on the replacement of the
energy-dependent potential by an equivalent energy-independent one
is related to a more general question on the Markus-Matsaev-type
factorization of operator-valued functions of the complex variable
and the existence of operator roots for those functions (see
\cite{MenShk,MrMt,LMMT}). The Schr\"odinger operator with a
potential depending on the energy like a resolvent represents an
example of the operator-valued function, the construction of
operator roots for which necessarily reduces to the search for graph
representations of invariant subspaces for some $2\times 2$ block
operator matrix  (see
\cite{M-JMP1995,MenShk,LMMT,Tretter-Book,AMM}). In its turn,
answering the question on the existence of the graph representations
leads to the study of mutual geometry of unperturbed and perturbed
invariant subspaces and, among other things, to the study of
spectral properties of the operator angles between these subspaces (see
\cite{KMM2} and references cited therein).
%\bigskip

In the present work we consider a particular case of the
self-adjoint subspace perturbation problem. Namely, we suppose that
the spectrum $\spec(A)$ of the initial self-adjoint operator $A$,
acting in a separable Hilbert space $\fH$, consists of a two
disjoint parts $\sigma_0$ and $\sigma_1$ such that the
first of them lies in a finite gap of the second one. (We
recall that by a finite gap of a closed set $\sigma\subset\bbR$ one
understands an open bounded interval on $\bbR$ that does not
intersect $\sigma$ but both of its ends belong to $\sigma$.) For
future references, we write our assumption on the mutual
position of the spectral components $\sigma_0$ and $\sigma_1$ in the
following form:
\begin{equation}
\label{tant} \spec(A)=\sigma_0\cup\sigma_1, \quad
\overline{\conv(\sigma_0)}\cap\overline{\sigma}_1=\emptyset
\quad\text{and}\quad \sigma_0\subset\conv(\sigma_1),
\end{equation}
where the symbol $\mathop{\mathrm{conv}}$ denotes the convex hull,
and the overlining means closure.

As for the (additive) perturbation $V$, we assume that it is a
bounded self-adjoint operator on $\fH$. Furthermore, it is supposed
that this perturbation is off-diagonal with respect to the partition
$\spec(A)=\sigma_0\cup\sigma_1$, which means that $V$ anticommutes
with the difference $\sE_A(\sigma_{0})-\sE_A(\sigma_{1})$ of the
spectral projections $\sE_A(\sigma_{0})$ and $\sE_A(\sigma_{1})$ of
$A$ corresponding to the spectral sets $\sigma_0$ and
$\sigma_1$.

Let  $d:=\dist(\sigma_0,\sigma_1)$ be the distance between the
sets $\sigma_0$ and $\sigma_1$. For the spectral
disposition \eqref{tant}, it has been established in  \cite{KMM3}
(also see \cite{Tretter-Book}) that the gaps between $\sigma_0$ and
$\sigma_1$ do not close under an off-diagonal self-adjoint
perturbation $V$ if its norm satisfies the condition
\begin{equation}
\label{V2d} \| V \| < \sqrt{2}\,\,d.
\end{equation}
Moreover, this condition is optimal. If \eqref{V2d} holds, the
spectrum of the perturbed operator $L=A+V$ is represented by the
union of two isolated  sets $\omega_0\subset\Delta$ and
$\omega_1\subset\bbR\setminus\Delta$ where $\Delta$ stands just for
the very same finite gap of the spectral set $\sigma_1$ that contains the
whole complementary spectral set~$\sigma_0$.

For the spectral disposition \eqref{tant} and off-diagonal
self-adjoint perturbations $V$, the optimal bound on the variation
of the spectral subspaces of a self-adjoint operator  has been
established, step by step, in papers \cite{MotSel} (for the case
where $\|V\|<d$) and \cite{AM2012} (for $d\leq\|V\|<\sqrt{2}d$); it
is written in terms of the difference of the respective spectral
projections $\sE_{A}(\sigma_0)$ and $\sE_{L}(\omega_0)$ of the
unperturbed and perturbed operators $A$ and $L$. We reproduce this
bound in the following statement (cf. \cite[Theorem 2]{MotSel} and
\cite[Theorem 1]{AM2012}).

\begin{introtheorem}
\label{Th1} Let $A$ be a self-adjoint operator on a separable
Hilbert space $\fH$, and let the spectrum of $A$ consist of a two
isolated components $\sigma_0$ and $\sigma_1$ which satisfy the
condition \eqref{tant}. Assume that $V$ is a bounded self-adjoint
operator on $\fH$, off-diagonal with respect to the partition
$\spec(A)=\sigma_0\cup\sigma_1$, and set $L=A+V$, $\Dom(L)=\Dom(A)$.
Furthermore, assume that the norm of $V$ satisfies inequality
\eqref{V2d}, and let $\omega_0=\spec(L)\cap\Delta$. Then the
following norm bound holds:
\begin{equation}
\label{edif1} \|\sE_{A}(\sigma_0)- \sE_{L}(\omega_0)\| \leq \sin
\left( \arctan\frac{\|V\|}{d}\right).
\end{equation}
\end{introtheorem}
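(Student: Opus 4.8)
The plan is to reduce \eqref{edif1} to a sharp norm bound for the solution of an operator Riccati equation, and then to establish that bound by a homotopy in the coupling constant. First I would pass to the block form determined by $P:=\sE_A(\sigma_0)$ and $P^\perp=\sE_A(\sigma_1)$. Since $V$ anticommutes with $P-P^\perp$, in the decomposition $\fH=\Ran P\oplus\Ran P^\perp$ one has $A=\diag(A_0,A_1)$ with $\spec(A_0)=\sigma_0$, $\spec(A_1)=\sigma_1$, $\dist(\sigma_0,\sigma_1)=d$, and
\[
V=\begin{pmatrix}0 & B\\ B^* & 0\end{pmatrix},\qquad \|B\|=\|V\|.
\]
By the spectral separation recalled above (valid for $\|V\|<\sqrt2\,d$, hence for $\|V\|<d$), $\spec(L)=\omega_0\cup\omega_1$ with $\omega_0\subset\Delta$, so $Q:=\sE_L(\omega_0)$ is well defined. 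A standard contour/resolvent estimate gives $\|P-Q\|<1$, whence $\Ran Q$ is the graph of a bounded operator $X\colon\Ran P\to\Ran P^\perp$, i.e. $\Ran Q=\{u+Xu:u\in\Ran P\}$, and $X$ solves the Riccati equation $A_1X-XA_0=XBX-B^*$.

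For a graph subspace the difference of the two orthogonal projections has the explicit norm $\|P-Q\|=\|X\|/\sqrt{1+\|X\|^2}=\sin(\arctan\|X\|)$; indeed the nonzero principal angles $\theta_j$ between $\Ran P$ and $\Ran Q$ satisfy $\tan\theta_j=s_j(X)$, so that $\|P-Q\|=\sin\theta_{\max}=\sin(\arctan\|X\|)$. Since $t\mapsto\sin(\arctan t)$ is increasing, \eqref{edif1} is equivalent to the sharp Riccati estimate $\|X\|\leq\|V\|/d$.

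To prove the latter I would deform: set $L_t=A+tV$ for $t\in[0,1]$. For each such $t$ the hypotheses hold with the same $d$ and $\|tV\|<d$, so the construction above yields a Riccati solution $X_t$, real-analytic in $t$, with $X_0=0$ and $X_1=X$. The \emph{unperturbed} Sylvester operator $\cS(Y)=A_1Y-YA_0$ on bounded operators $\Ran P\to\Ran P^\perp$ is invertible with $\|\cS^{-1}\|=1/d$, because $A_0,A_1$ are self-adjoint with $\dist(\spec A_0,\spec A_1)=d$. Differentiating the Riccati equation in $t$ gives $\cS(\dot X_t)=(X_tBX_t-B^*)+t(\dot X_tBX_t+X_tB\dot X_t)$. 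The goal is the sharp differential inequality $\tfrac{d}{dt}\|X_t\|\leq\|V\|/d$, which integrates to $\|X\|=\|X_1\|\leq\|V\|/d$, as required.

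The hard part will be precisely this sharp rate. Estimating $\dot X_t=\cS^{-1}(\cdots)$ by $\|\cS^{-1}\|=1/d$ and the triangle inequality only yields, at leading order, $\tfrac{d}{dt}\|X_t\|\leq\frac{\|V\|}{d}\bigl(1+\|X_t\|^2\bigr)$, which integrates to the strictly weaker $\|X\|\leq\tan(\|V\|/d)$, i.e. $\|P-Q\|\leq\sin(\|V\|/d)$. The loss comes from the nonlinear term $X_tBX_t$ together with the fact that, because $\sigma_0$ lies in a finite gap of $\sigma_1$, the kernel $(\lambda-\mu)^{-1}$ representing $\cS^{-1}$ (for $\lambda\in\sigma_1$, $\mu\in\sigma_0$) changes sign, so no one-sided separation is available as in the classical Davis--Kahan $\tan\Theta$ theorem. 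To close the gap I would differentiate the scalar $s_t:=\|X_t\|$ directly: taking a top eigenvector $\xi_t$ of $X_t^*X_t$ with $X_t\xi_t=s_t\eta_t$, one has $\tfrac{d}{dt}s_t=\Real\langle\dot X_t\xi_t,\eta_t\rangle$, and I would test the spectral representation of $\cS^{-1}$ against the pair $(\xi_t,\eta_t)$, using the self-adjointness of $A_0,A_1$ and the geometric constraint $\sigma_0\subset\conv(\sigma_1)$ to show that the destabilizing contributions cancel and leave exactly $\Real\langle\dot X_t\xi_t,\eta_t\rangle\leq\|V\|/d$. That the constant cannot be improved is confirmed by the three–dimensional model $A=\diag(0,d,-d)$ with symmetric off-diagonal coupling, for which one computes $\tan\theta=\|V\|/d$ and hence equality in \eqref{edif1}.
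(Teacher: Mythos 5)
Your reduction to the Riccati equation and to the equivalent sharp estimate $\|X\|\leq\|V\|/d$ is correct and coincides with the paper's starting point (it is exactly the content of \eqref{EEX} together with Theorem \ref{T-KMMa}). The genuine gap is in the second half: the homotopy argument never actually proves the sharp differential inequality $\tfrac{d}{dt}\|X_t\|\leq\|V\|/d$. You correctly observe that the naive estimate via $\|\cS^{-1}\|=1/d$ only yields $\tfrac{d}{dt}\|X_t\|\leq\tfrac{\|V\|}{d}\bigl(1+\|X_t\|^2\bigr)$ and hence the strictly weaker bound $\|X\|\leq\tan(\|V\|/d)$, and you then assert that testing the spectral representation of $\cS^{-1}$ against a top singular pair makes the destabilizing contributions cancel. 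But no mechanism for this cancellation is given, and it is precisely here that the whole difficulty of the a priori $\tan\Theta$ theorem is concentrated: because $\sigma_0$ sits inside a finite gap of $\sigma_1$, the kernel $(\lambda-\nu)^{-1}$ ($\lambda\in\sigma_1$, $\nu\in\sigma_0$) representing $\cS^{-1}$ has no fixed sign, so the pointwise bound $\Real\langle\dot X_t\xi_t,\eta_t\rangle\leq\|V\|/d$ does not follow from any of the stated ingredients. In addition, several technical steps of the scheme are left unjustified: $t\mapsto\|X_t\|$ need not be differentiable; in infinite dimensions $X_t^*X_t$ need not possess a top eigenvector $\xi_t$ at all (the paper itself has to treat this case by a separate finite-dimensional approximation); and the claimed real-analyticity of $t\mapsto X_t$ requires an argument.

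For comparison, the paper proceeds without any deformation: assuming first that $|X|$ has an eigenvector $u$ at the eigenvalue $\mu=\|X\|$, it invokes two exact identities (Lemma \ref{Lcr}, taken from \cite{AM2012}) relating $A_0u$, $B^*u$, $A_1Uu$, $BUu$ and the operator $\Lambda_0$ similar to $A_0+BX$. The localizations $\spec(\Lambda_0)=\omega_0\subset\Delta$ and $\spec(A_1)\cap\Delta=\emptyset$ turn these identities into the inequality $\frac{\mu}{1-\mu^2}\leq\sup_{(x,y)\in\Omega}\varphi(x,y)$ for the explicit rational function \eqref{phi}, whose maximization yields the stronger, $|\Delta|$-dependent bound \eqref{thebest}; Theorem \ref{Th1} then follows by maximizing $\varkappa(D,d,\|V\|)$ over $D\geq 2d$. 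If you want to salvage your route, the missing sharp rate would have to be extracted from some analogue of these identities; as written, the proposal stops exactly where the proof has to begin.
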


Recall that if $P$ and $Q$ are orthogonal projections in a Hilbert
space then the quantity
\begin{equation}
\label{TetPQ} \Theta(\fP,\fQ):=\arcsin|P-Q|,
\end{equation}
where $|P-Q|=\sqrt{(P-Q)^2}$ stands for the absolute value of $P-Q$,
is called the \emph{operator angle} between the subspaces
$\fP=\Ran(P)$ and $\fQ=\Ran(Q)$. A substantial discussion of the
term {\guillemotleft}operator angle{\guillemotright} and relevant
references may be found, in particular, in the recent paper
\cite[Section 2]{SeelNotes}. In its turn, the norm of the operator
angle $\Theta(\fP,\fQ)$ determines the \emph{maximal angle}
$\theta(\fP,\fQ)$ between $\fP$ и $\fQ$ (see \cite{AM2013}). Namely,
$\theta(\fP,\fQ)=\arcsin\|P-Q\|$.

In view of \eqref{TetPQ} the estimate \eqref{edif1} is equivalent to
the bound
\begin{equation}
\label{Tet} \tan\Theta(\fA_0,\fL_0)\leq\frac{\|V\|}{d}
\end{equation}
for the tangent of the operator angle $\Theta(\fA_0,\fL_0)$ between
unperturbed and perturbed spectral subspaces
$\fA_0=\Ran\bigl(\sE_A(\sigma_0)\bigr)$ and
$\fL_0=\Ran\bigl(\sE_L(\omega_0)\bigr)$, respectively. Unlike the
Davis-Kahan $\tan\Theta$ theorem \cite[Theorem 6.3]{DK70} and its
extensions in \cite{KMM3} and \cite{Naka}, the bound \eqref{edif1}
involves the distance only between the unperturbed spectral
components $\sigma_0$ and $\sigma_1$. This is why it is called in
\cite{MotSel} and \cite{AM2012} the \emph{a priori} $\tan\Theta$
theorem.

The proof of Theorem \ref{Th1} for $\|V\|<d$ given in \cite{MotSel},
is based on the study of location of the spectrum of the product
$J'J$ of the self-adjoint involutions
$J=\sE_A(\sigma_0)-\sE_A(\sigma_1)$ and
$J'=\sE_L(\omega_0)-\sE_L(\omega_1)$. In this way, the proof in
\cite{MotSel} requires a reformulation of the subspace perturbation
problem into the language of pairs of involutions. It
also relies on a knowledge of certain properties of the polar
decomposition for maximal accretive operators.

The present note is aimed at giving a proof of \eqref{edif1} that
would be independent from the approach suggested in \cite{MotSel}.
Moreover, we think that the proof presented below in Section
\ref{Sec3} is simpler and more straightforward than
the proof in \cite{MotSel}. Our new proof is based on the rather
standard technique  \cite{AMM} involving the reduction of the
invariant subspace perturbation problem under consideration to the
study of the operator Riccati equation
\begin{equation}
\label{Ric0} XA_0-A_1X+XBX=B^*,
\end{equation}
where $A_0=A\bigr|_{\fA_0}$ and $A_1=A\bigr|_{\fA_1}$ are the parts
of the self-adjoint operator $A$ in its spectral subspaces
$\fA_0=\Ran\bigl(\sE_A(\sigma_0)\bigr)$ and
$\fA_1=\Ran\bigl(\sE_A(\sigma_1)\bigr)$, and $B=V\bigr|_{\fA_1}$. As
it was established in \cite{KMM3}, the perturbed spectral subspace
$\fL_0=\Ran\bigl(\sE_L(\omega_0)\bigr)$ is the graph of a solution
$X\in\cB(\fA_0,\fA_1)$ to equation \eqref{Ric0}. The latter means
(see, e.g., \cite{KMM2}) that
\begin{equation}
\label{EEX} \|\sE_{A}(\sigma_0)-\sE_{L}(\omega_0)\|=\sin \left(
\arctan\|X\|\right).
\end{equation}
Thus, when one obtains a bound for the norm of the solution $X$,
one simultaneously  finds an estimate for the norm of the
difference of the spectral projections $\sE_{L}(\omega_0)$ and
$\sE_{A}(\sigma_0)$.

In our derivations we rely on the identities established in
\cite[Lemma 2.2]{AM2012} for eigenvalues and eigenvectors of the
absolute value $|X|=\sqrt{X^*X}$ of the operator $X$. Starting from
these identities (more precisely, from the identities \eqref{Id2}
and \eqref{Id1} below), we obtain,  for $\|\sE_{A}(\sigma_0)-
\sE_{L}(\omega_0)\|$, a bound \eqref{thebest} which is stronger but
more detail than the estimate \eqref{edif1}: The bound
\eqref{thebest} involves, along with $\|V\|$ and $d$, also the
length $|\Delta|$ of the gap $\Delta$. The estimate \eqref{thebest}
reproduces the main result of \cite[Theorem 5.3]{MotSel}. Just the
proof of this estimate, alternative to the proof in \cite{MotSel},
we consider as the principal result of the present work. It was
already pointed out in \cite{MotSel} that the bound \eqref{edif1} is
nothing but a simple corollary to the more detail estimate
\eqref{thebest}.

Let us describe the structure of the paper. Alongside with the
already mentioned identities \eqref{Id2} and \eqref{Id1} for
eigenvalues and eigenvectors of the absolute value of the solution
to the Riccati equation \eqref{Ric0}, Section \ref{SecOR} contains a
selection of known results concerning the location of the spectrum
of the perturbed operator $L=A+V$ and the solvability of \eqref{Ric0}
under the weaker than \eqref{V2d} but more detail condition
$\|V\|<\sqrt{d|\Delta|}$. The principal result of the work --- the
new proof of the bound on rotation of the spectral subspace
$\fA_0=\Ran\bigl(\sE_{A}(\sigma_0)\bigr)$ under an off-diagonal
perturbation $V$ satisfying condition
$\|V\|<\sqrt{d(|\Delta|-d)}$\,\, --- is presented in Section
\ref{Sec3} (see Theorem \ref{T-MS}). Section \ref{Sec3} comes to the
end with the proof of Theorem \ref{Th1}.

Throughout the paper, by a subspace we always understand a closed
linear subset of a Hilbert space. The notation $\cB(\fM,\fN)$ is
applied to the Banach space of bounded linear operators from a
Hilbert space $\fM$ to a Hilbert space в $\fN$. The orthogonal sum
of two Hilbert spaces (or orthogonal subspaces) $\fM$ and $\fN$ is
denoted by $\fM\oplus\fN$.  The graph
$\cG(K):=\{y\in\fM\oplus\fN\,|\,\,y=x\oplus Kx,\,\, x\in\fM\}$ of an
operator $K\in\cB(\fM,\fN)$ is called the graph subspace associated
with $K$. At the same time, the operator $K$ itself is called the
angular operator associated with the ordered pair
$\bigl(\fM,\fK\bigr)$ of the subspaces $\fM$ and $\fK=\cG(K)$. The
notations $\Dom(Z)$ and $\Ran(Z)$ are used respectively for the
domain and the range of a linear operator~$Z$.

\section{Preliminaries}
\label{SecOR}

It is convenient for us to use a block matrix representation of the
operators under consideration. Thus, we adopt the following
hypothesis (notice that this hypothesis does not yet concern the
mutual position of the spectra of the operators $A_0$ and $A_1$).

\begin{hypothesis}
\label{HypL} Let $\fA_0$ and $\fA_1$ be complementary orthogonal
subspaces of a separable Hilbert space $\fH$, i.e.
$\fH=\fA_0\oplus\fA_1$. Assume that $A$ is a self-adjoint operator
in $\fH$ admitting the block diagonal matrix representation
\begin{align}
\label{Adiag}
A & = \left(\begin{array}{cc} A_0 & 0\\
0 & A_1
\end{array}\right), \quad \Dom(A)=\fA_0\oplus\Dom(A_1),
\end{align}
where $A_0$ is a bounded self-adjoint operator on $\fA_0$, and
$A_1$, a possibly unbounded self-adjoint operator in $\fA_1$. Let
$V$ be a bounded self-adjoint operator on $\fH$, off-diagonal with
respect to the decomposition $\fH=\fA_0\oplus\fA_1$, that is,
\begin{align}
\label{Voff}
V & =\left(\begin{array}{cc} 0 & B\\
B^* & 0
\end{array}\right),
\end{align}
where $0\neq B\in\cB(\fA_1,\fA_0)$. Assume that $L=A+V$,
$\Dom(L)=\Dom(A)$, and, hence,
\begin{equation}
\label{Ltot} L=\begin{pmatrix} A_0 & B \\ B^* & A_1 \end{pmatrix},
\quad \Dom(L)=\fA_0\oplus\Dom(A_1).
\end{equation}
\end{hypothesis}

Under Hypothesis \ref{HypL}, an operator
$X\in\cB(\fA_0,\fA_1)$ is said to be a solution to the operator
Riccati equation \eqref{Ric0} if
\begin{equation}
\label{DefSolRic} \Ran(X)\subset\Dom(A_1)
\end{equation}
and equality \eqref{Ric0} holds as an operator identity (cf., e.g.,
\cite[Definition 3.1]{AMM}). Clearly, the solution $X$ (if it
exists) may only be non-zero. Otherwise, $X=0$ would imply
that $B=0$, which contradicts the assumption.
In what follows, by $U$ we denote the partial isometry in the polar
decomposition $X=U|X|$ of the solution $X$. In particular, for $U$
we have
\begin{equation}
\label{Kisom} U \text{ \ is an isometry on\, } \Ran(|X|)=\Ran(X^*).
\end{equation}
We adopt the convention that the
action of $U$ is trivially extended onto the kernel
$\Ker(X)=\Ker(|X|)$ of $X$, i.e.
\begin{equation*}
%%%%\label{conviso}
U|_{\Ker(X)}=0.
\end{equation*}
In such a case the operator  $U$ is uniquely defined on all the
subspace $\fA_0$ (see, e.g., \cite[Theorem 8.1.2]{BirSol}).

We will need two identities for eigenvalues and eigenvectors (in
case if they exist) of the absolute value  $|X|$. The first of these
identities has been established in \cite[Lemma 2.2]{AM2012}. The
second identity trivially follows from other identities also proven
in \cite[Lemma 2.2]{AM2012}.

\begin{lemma}[\cite{AM2012}]
\label{Lcr} Assume Hypothesis \ref{HypL}. Suppose that the Riccati
equation \eqref{Ric0} possesses a solution $X\in\cB(\fA_0,\fA_1)$
and that the absolute value $|X|$ of this solution has an eigenvalue
$\lambda\geq 0$. Let $u$, $u\neq 0$, be an eigenvector of $|X|$
corresponding to the eigenvalue $\lambda$, i.e.  $|X|u=\lambda u$.
If $U$ is the isometry from the polar decomposition $X=U|X|$ of $X$,
then $Uu\in\Dom(A_1)$ and the following two identities hold:
\begin{align}
\nonumber
\lambda\bigl(\|A_1Uu\|^2+\|BUu\|^2-\|A_0u\|^2-&\|B^*u\|^2\bigr)\\
\label{Id2} =-(1-\lambda^2)&\bigl(\lal A_0u,BUu\ral+\lal
B^*u,A_1Uu\ral\bigr),\quad\quad
\end{align}
and
\begin{align}
\label{Id1} \lal A_0u,BUu\ral+\lal B^*u,A_1Uu\ral&=
-\lambda\bigl(\|A_1Uu\|^2+\|BUu\|^2-\|\Lambda_0 u\|^2\bigr),\quad
\end{align}
where
\begin{equation}\label{Lam0}
\Lambda_0:=(I+|X|^2)^{1/2}(A_0+BX) (I+|X|^2)^{-1/2}
\end{equation}
is a bounded self-adjoint operator on $\fA_0$.
\end{lemma}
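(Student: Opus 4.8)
The plan is to promote the single eigenvector $u$ of $|X|$ to a genuine eigen-configuration of the self-adjoint block matrix $L$ inside $\fH=\fA_0\oplus\fA_1$, and then to read both identities off from the orthogonality of two natural graph vectors. Write $v:=Uu$. The case $\lambda=0$ is immediate: then $u\in\Ker|X|=\Ker X$, so $Xu=0$ and $v=Uu=0$ by the convention $U|_{\Ker X}=0$, and every term in \eqref{Id2} and \eqref{Id1} containing $v$ drops out, leaving $0=0$. So suppose $\lambda>0$. From $|X|u=\lambda u$ we get $X^*Xu=|X|^2u=\lambda^2u$, hence $Xu=U|X|u=\lambda v$ and therefore $\lambda X^*v=X^*(Xu)=\lambda^2u$, i.e. $X^*v=\lambda u$. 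In particular $v=\lambda^{-1}Xu\in\Ran(X)\subset\Dom(A_1)$ by the solution requirement \eqref{DefSolRic}; this already yields the asserted $Uu\in\Dom(A_1)$ and makes $A_1v$ meaningful.

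Next I would introduce the two vectors $g=\begin{pmatrix}u\\ \lambda v\end{pmatrix}=\begin{pmatrix}u\\ Xu\end{pmatrix}\in\cG(X)$ and $h=\begin{pmatrix}-\lambda u\\ v\end{pmatrix}=\begin{pmatrix}-X^*v\\ v\end{pmatrix}$, the latter lying in $\cG(X)^\perp=\{-X^*q\oplus q: q\in\fA_1\}$; both belong to $\Dom(L)$ since $v\in\Dom(A_1)$. Because $X$ solves \eqref{Ric0}, the graph $\cG(X)$ is $L$-invariant, and as $L$ is self-adjoint it in fact reduces $L$ (cf. \cite{KMM3,AMM}); hence $Lg\in\cG(X)$, $Lh\in\cG(X)^\perp$, and in particular $Lg\perp Lh$. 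The key observation is that the three scalars entering \eqref{Id2}--\eqref{Id1}, namely $P:=\|A_0u\|^2+\|B^*u\|^2$, $Q:=\|A_1v\|^2+\|Bv\|^2$ and $S:=\lal A_0u,Bv\ral+\lal B^*u,A_1v\ral$, are precisely $P=\|Le_0\|^2$, $Q=\|Le_1\|^2$ and $S=\lal Le_0,Le_1\ral$ for the coordinate vectors $e_0=\begin{pmatrix}u\\ 0\end{pmatrix}$ and $e_1=\begin{pmatrix}0\\ v\end{pmatrix}$.

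Now the computation is short. The elementary vector identities $e_0=(1+\lambda^2)^{-1}(g-\lambda h)$ and $e_1=(1+\lambda^2)^{-1}(\lambda g+h)$, together with $Lg\perp Lh$, express everything through $a:=\|Lg\|^2$ and $b:=\|Lh\|^2$: one finds $P=(a+\lambda^2b)/(1+\lambda^2)^2$, $Q=(\lambda^2a+b)/(1+\lambda^2)^2$ and $S=\lambda(a-b)/(1+\lambda^2)^2$, the last equality in particular showing that $S$ is real. It remains only to identify $a$ with $(1+\lambda^2)\|\Lambda_0u\|^2$. Using $(I+|X|^2)^{\pm1/2}u=(1+\lambda^2)^{\pm1/2}u$, the relation $\||X|w\|=\|Xw\|$, the value $(A_0+BX)u=A_0u+\lambda Bv$, and the Riccati rearrangement $X(A_0+BX)=A_1X+B^*$ applied to $u$ (which gives $X(A_0u+\lambda Bv)=B^*u+\lambda A_1v$), a direct computation yields $\|\Lambda_0u\|^2=(1+\lambda^2)^{-1}\bigl(\|A_0u+\lambda Bv\|^2+\|B^*u+\lambda A_1v\|^2\bigr)=a/(1+\lambda^2)$. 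Substituting these expressions, both \eqref{Id2} (in the form $\lambda(Q-P)=-(1-\lambda^2)S$) and \eqref{Id1} (in the form $S=-\lambda(Q-\|\Lambda_0u\|^2)$) collapse to elementary algebraic identities in $a,b,\lambda$.

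I expect the only genuine obstacle to be the soft-analysis justification, for possibly unbounded $A_1$, that $\cG(X)$ truly reduces the self-adjoint operator $L$ and that $\Lambda_0$ in \eqref{Lam0} is a well-defined bounded self-adjoint operator on $\fA_0$; I would import both facts from the graph-subspace theory of \cite{KMM3,AMM}, the standing hypothesis $\Ran(X)\subset\Dom(A_1)$ being exactly what places $g,h,e_1$ in $\Dom(L)$. Everything past that point is bookkeeping: once $Lg\perp Lh$ and $a=(1+\lambda^2)\|\Lambda_0u\|^2$ are secured, the two identities are forced.
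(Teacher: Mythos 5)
Your proposal is correct. Note, however, that the present paper does not prove Lemma \ref{Lcr} at all: it imports it verbatim from \cite[Lemma 2.2]{AM2012}, remarking only that \eqref{Id2} is identity (2.7) there and that \eqref{Id1} is obtained by combining identities (2.8) and (2.9) there. So there is no in-paper proof to match; what you have produced is a self-contained derivation, and it checks out. Writing $v=Uu$, your relations $Xu=\lambda v$, $X^*v=\lambda u$, $v\in\Ran(X)\subset\Dom(A_1)$ are right, and the whole argument reduces to the single orthogonality $Lg\perp Lh$ for $g=u\oplus Xu\in\cG(X)$ and $h=(-X^*v)\oplus v\in\cG(X)^\perp$, plus the identification $\|Lg\|^2=(1+\lambda^2)\|\Lambda_0u\|^2$. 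I verified the algebra: with $e_0=(1+\lambda^2)^{-1}(g-\lambda h)$, $e_1=(1+\lambda^2)^{-1}(\lambda g+h)$ one indeed gets $P=(a+\lambda^2b)/(1+\lambda^2)^2$, $Q=(\lambda^2a+b)/(1+\lambda^2)^2$, $S=\lambda(a-b)/(1+\lambda^2)^2$, and both \eqref{Id2} and \eqref{Id1} collapse to identities in $a,b,\lambda$; likewise $\|\Lambda_0u\|^2=(1+\lambda^2)^{-1}\bigl(\|w\|^2+\|Xw\|^2\bigr)$ with $w=(A_0+BX)u$ equals $a/(1+\lambda^2)$ via the Riccati rearrangement. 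The one step you flag as ``imported soft analysis''--- that $Lh\in\cG(X)^\perp$ --- is in fact cheaper than you suggest: since $\Ran(X)\subset\Dom(A_1)$ forces $\cG(X)\subset\Dom(L)$ in its entirety, for any $h\in\cG(X)^\perp\cap\Dom(L)$ and any $g\in\cG(X)$ one has $\lal Lh,g\ral=\lal h,Lg\ral=0$ directly from self-adjointness of $L$ and the invariance $L\,\cG(X)\subset\cG(X)$ (which is the Riccati equation itself), so no external reducing-subspace machinery is needed. Your approach has the merit of producing both identities, and the reality of $\lal A_0u,BUu\ral+\lal B^*u,A_1Uu\ral$, from one orthogonality relation rather than from three separate inner-product manipulations of the Riccati equation as in \cite{AM2012}.
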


\begin{remark}
\eqref{Id2} represents \cite[identity (2.7)]{AM2012}. The identity
\eqref{Id1} is obtained by combining two remaining identities
from \cite[Lemma 2.2]{AM2012} (see \cite[identities (2.8) and
(2.9)]{AM2012}).
\end{remark}

Below we will only discuss the spectral disposition \eqref{tant}.
Sometimes, we will need its more detail description.

\begin{hypothesis}
\label{HypD} Assume Hypothesis \ref{HypL}. Let $\sigma_0=\spec(A_0)$
and $\sigma_1=\spec(A_1)$. Suppose that the open interval
$\Delta=(\gamma_{\rl},\gamma_{\rr})\subset\bbR$,\,
$\gamma_\rl<\gamma_\rr$,\, serves as a finite gap of the spectral
set $\sigma_1$ and $\sigma_0\subset\Delta$. Let
$d=\dist\bigl(\sigma_0,\sigma_1\bigr)$.
\end{hypothesis}

Now we want to reproduce a known result regarding the position
of the spectrum of the perturbed operator $L=A+V$ and a
known result on the solvability of the associated Riccati equation
\eqref{Ric0} in the spectral case \eqref{tant}. Both these results
have been established in \cite{KMM3} within an approach that is
completely alternative to the methods and technique employed later
on in \cite{MotSel}.

\begin{theorem}  \label{T-KMMa}
Assume Hypothesis \ref{HypD}. Also assume that
\begin{equation}
\label{VdD} \|V\|<\sqrt{d|\Delta|}.
\end{equation}
Then:
\begin{enumerate}
\item[{\rm(i)}]
The spectrum of the block operator matrix $L$ consists of two
isolated parts $\omega_0\subset\Delta$ and
$\omega_1\subset\bbR\setminus\Delta$. Moreover,
\begin{equation}
\label{incl} \min(\omega_0)\geq \gamma_\rl+(d-r_V)\text{\, \, and
\,} \max(\omega_0)\leq\gamma_\rr- (d-r_V),
\end{equation}
where
\begin{equation}
\label{rV}
r_V:=\|V\|\tan\left(\frac{1}{2}\arctan\frac{2\|V\|}{|\Delta|-d}\right)<d.
\end{equation}

\item[{\rm(ii)}] There exists a
unique solution $X\in\cB(\fA_0,\fA_1)$ to the operator Riccati
equation \eqref{Ric0} with the properties
\begin{equation}
\label{sigL} \qquad\qquad\qquad\spec(A_0+BX)=\omega_0\,\text{ \, and
\, }\, \spec(A_1-B^*X^*)=\omega_1.
\end{equation}
Moreover, the spectral subspaces
$\fL_0=\Ran\bigl(\sE_L(\omega_0)\bigr)$ and
$\fL_1=\Ran\bigl(\sE_L(\omega_1)\bigr)$ admit the graph
representations $\fL_0=\cG(X)$ and $\fL_1=\cG(-X^*)$ associated with
the angular operators $X$ and $-X^*$, respectively.

\end{enumerate}
\end{theorem}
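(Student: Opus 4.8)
The plan is to reduce both assertions to a study of the first Schur complement (transfer function) of $L$ on the gap $\Delta$, namely the self-adjoint holomorphic operator family
\[
M(z):=A_0-z-B(A_1-z)^{-1}B^*,\qquad z\in\Delta,
\]
which is well defined because $\Delta$ is a gap of $\sigma_1=\spec(A_1)$. The Frobenius--Schur factorization of $L-z$ shows that for real $z\in\Delta$ one has $z\in\spec(L)$ if and only if $0\in\spec\bigl(M(z)\bigr)$, so that $\omega_0=\spec(L)\cap\Delta$ is exactly the set on which $M$ fails to be boundedly invertible. Part~(i) will follow from a definiteness estimate on $M(z)$ near the edges of $\Delta$, while part~(ii) will follow from the strict monotonicity of $M$ on $\Delta$ together with that same edge estimate.

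For part~(i) I would split $A_1=A_1^-\oplus A_1^+$ according to the parts of $\sigma_1$ lying in $(-\infty,\gamma_\rl]$ and in $[\gamma_\rr,\infty)$, and decompose $B=(B^-\ \,B^+)$ conformally. For $z\in\Delta$ one has $(A_1^--z)^{-1}<0$ and $(A_1^+-z)^{-1}>0$, so the left-edge contribution to $M(z)$ is nonnegative. Writing $s:=z-\gamma_\rl$ and using $A_0-z\ge(\min\sigma_0-z)I\ge(d-s)I$ together with $\|B^+(A_1^+-z)^{-1}(B^+)^*\|\le\|V\|^2/(|\Delta|-s)$ gives
\[
M(z)\ \ge\ \Bigl((d-s)-\frac{\|V\|^2}{|\Delta|-s}\Bigr)I .
\]
Hence $M(z)>0$, and $z\notin\spec(L)$, as soon as $(d-s)(|\Delta|-s)>\|V\|^2$. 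A short computation identifies the smaller root of $(d-s)(|\Delta|-s)=\|V\|^2$ with $s_*=d-r_V$ (and shows $\|V\|<\sqrt{d|\Delta|}$ is precisely the condition $r_V<d$), so $\min(\omega_0)\ge\gamma_\rl+(d-r_V)$. The bound $\max(\omega_0)\le\gamma_\rr-(d-r_V)$ is obtained symmetrically near $\gamma_\rr$, and since $d-r_V>0$ these inequalities separate $\omega_0\subset\Delta$ from $\omega_1=\spec(L)\setminus\Delta\subset\bbR\setminus\Delta$ by a positive distance, which proves (i).

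For part~(ii) the key structural observation is that on the real gap the derivative
\[
M'(z)=-I-B(A_1-z)^{-2}B^*\ \le\ -I<0
\]
is negative definite, so $M$ is a self-adjoint holomorphic family of strictly decreasing type on $\Delta$; combined with the edge definiteness from (i) (positivity near $\gamma_\rl$, negativity near $\gamma_\rr$), the factorization theory for such families (of Virozub--Matsaev/Markus type) applies. I would use it to conclude that the Riesz projection $\sE_L(\omega_0)$, whose $(1,1)$-block is $-\frac{1}{2\pi i}\oint_\Gamma M(z)^{-1}\,dz$, has a boundedly invertible $(1,1)$-block; this is equivalent to $\fL_0=\Ran\bigl(\sE_L(\omega_0)\bigr)$ being a graph over $\fA_0$, i.e. $\fL_0=\cG(X)$ with $X\in\cB(\fA_0,\fA_1)$ read off from the blocks of $\sE_L(\omega_0)$. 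The invariance $L\fL_0\subset\fL_0$ then translates verbatim into the Riccati equation \eqref{Ric0}, while the graph isomorphism $x\mapsto x\oplus Xx$ intertwines $A_0+BX$ with $L|_{\fL_0}$, giving $\spec(A_0+BX)=\omega_0$; passing to orthogonal complements yields $\fL_1=\cG(-X^*)$ and $\spec(A_1-B^*X^*)=\omega_1$. Uniqueness of the solution with these spectral properties follows from Rosenblum's theorem, since the difference of two such solutions intertwines $A_0+BX$ (spectrum $\omega_0$) and $A_1-B^*X^*$ (spectrum $\omega_1$), and $\omega_0\cap\omega_1=\emptyset$ forces it to vanish.

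I expect the genuine obstacle to be the invertibility of the $(1,1)$-block of $\sE_L(\omega_0)$, equivalently that the operator angle $\Theta(\fA_0,\fL_0)$ stays strictly below $\pi/2$ throughout the whole range $\|V\|<\sqrt{d|\Delta|}$. A naive numerical-range argument fails here: because $\sigma_1$ straddles $\Delta$, the convex hull of $\omega_1$ typically covers all of $\Delta$, so one cannot exclude $\fL_0\cap\fA_1\neq\{0\}$ by comparing quadratic forms. This is exactly why the monotonicity $M'\le-I$ together with the quantitative edge margin $d-r_V$ from part~(i) is needed. An alternative route to the same conclusion is a homotopy in $t\in[0,1]$ along $L_t=A+tV$, with non-degeneration of the graph along the path secured by applying part~(i) to each $tV$.
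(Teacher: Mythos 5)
This theorem is not proved in the paper at all: the remark following it explicitly borrows the formulation from \cite[Theorem 2.4]{AM2012} and attributes assertion (i) to \cite[Theorem 3.2]{KMM3} and assertion (ii) to \cite[Theorems 1 and 2.3]{KMM3}, so there is no in-paper proof to compare against. Judged on its own, your part (i) is a complete and correct argument: the Schur complement $M(z)$ controls $\spec(L)\cap\Delta$, the sign splitting of $(A_1^\pm-z)^{-1}$ is right, and the smaller root of $(d-s)(|\Delta|-s)=\|V\|^2$ is indeed $s_*=d-r_V$ by the half-angle identity $\tan\bigl(\tfrac12\arctan t\bigr)=(\sqrt{1+t^2}-1)/t$, with $r_V<d$ equivalent to $\|V\|<\sqrt{d|\Delta|}$; this cleanly reproduces \eqref{incl}. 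Part (ii) is a correct outline, but you should be aware that its crux --- bounded invertibility of the $(1,1)$-block of $\sE_L(\omega_0)$, i.e.\ the graph property --- is precisely the nontrivial content of \cite[Theorem 2.3]{KMM3}, and you discharge it by invoking Virozub--Matsaev/Markus factorization theory for the monotone family $M$; that is legitimate (and is essentially the route taken in \cite{KMM3}), but it means this step is cited rather than proved. Two smaller points: the uniqueness argument via Rosenblum is fine once you note that the difference of two solutions actually intertwines $A_0+BX_2$ with $A_1-X_1B=(A_1-B^*X_1^*)^*$, which has the same (real) spectrum $\omega_1$; and the alternative homotopy along $L_t=A+tV$ is not airtight as stated, since norm-continuity of $t\mapsto\sE_{L_t}(\omega_0(t))$ together with openness of the gap does not by itself prevent $\|\sE_A(\sigma_0)-\sE_{L_t}(\omega_0(t))\|$ from creeping continuously up to $1$, so the graph property could still degenerate without an additional a priori bound.
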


\begin{remark}
The formulations in items (i) and (ii) of  Theorem \ref{T-KMMa} are
borrowed from \cite[Theorem 2.4]{AM2012}. Assertion (i) follows from
\cite[Theorem~3.2]{KMM3}. The assertion (ii) is obtained by combining the
statements from \cite[Theorem 2.3]{KMM3} and the existence and
uniqueness results for the Riccati equation \eqref{Ric0} proven in
\cite[Theorem~1\,(i)]{KMM3}.
\end{remark}

\section{The bound on rotation of the spectral subspace \\
and its new proof} \label{Sec3}

The  a priori sharp norm bound for the operator angle between the
unperturbed and perturbed spectral subspaces
$\fA_0=\Ran\bigl(\sE_A(\sigma_0)\bigr)$ and
$\fL_0=\Ran\bigl(\sE_L(\omega_0)\bigr)$, involving not only the
distance $d$ but also another parameter, the length $|\Delta|$ of
the spectral gap $\Delta$, has been established for the first time
in \cite[Theorem 5.3]{MotSel}. This has been done under the
requirement
\begin{equation}
\label{VMS} \| V \| < \sqrt{d(|\Delta| - d)}
\end{equation}
stronger than the condition \eqref{VdD} in Theorem \ref{T-KMMa}.
The main assertion in \cite[Theorem 5.3]{MotSel}, written in
terms of the norm of the difference $\sE_{A}(\sigma_0)-
\sE_{L}(\omega_0)$ of the spectral projections $\sE_{A}(\sigma_0)$
and $\sE_{L}(\omega_0)$, is as follows.

\begin{theorem}[\cite{MotSel}]
\label{T-MS} Assume Hypothesis \ref{HypD}. Assume, in addition, that
inequality \eqref{VMS} holds.  Then
\begin{equation}
\label{thebest}  \|\sE_{A}(\sigma_0)- \sE_{L}(\omega_0)\| \leq
\sin\left(\frac{1}{2}\arctan
\varkappa\bigl(|\Delta|,d,\|V\|\bigr)\right)\quad\left(<\frac{\sqrt{2}}{2}\right),
\end{equation}
where $\omega_0=\spec(L)\cap\Delta$ \,\, and the quantity
$\varkappa(D,d,\slv)$ is defined for
\begin{equation}
\label{Om12} D>0,\quad 0< d\leq \dfrac{D}{2}\quad\text{и}\quad 0\leq
\slv<\sqrt{d(D-d)}
\end{equation}
by
\begin{equation*}
\varkappa(D,d,\slv):=\left\{\begin{array}{cl}
\mbox{\small$\displaystyle\frac{2\slv}{d}$} & \text{if \,} \slv \leq
\mbox{\small$\displaystyle{\frac{1}{2}}$}\sqrt{d\left(D-2d\right)},\\[4mm]
\mbox{\small$\displaystyle\frac{{\slv}D+\sqrt{d(D-d)}\,
\sqrt{(D-2d)^2+4{\slv}^2}}{2\,\bigl(d(D-d)-{\slv}^2\bigr)}$} &
\text{if \,} \slv
>\mbox{\small$\displaystyle{\frac{1}{2}}$}\sqrt{d\left(D-2d\right)}.
\end{array}\right.
\end{equation*}
\end{theorem}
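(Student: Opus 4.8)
The plan is to reduce everything to an upper bound on the norm of the angular operator $X\in\cB(\fA_0,\fA_1)$ furnished by Theorem \ref{T-KMMa}(ii), and then to read off \eqref{thebest} from \eqref{EEX}. Writing $D:=|\Delta|$ and $\slv:=\|V\|$, and using that $t\mapsto\sin(\arctan t)$ is strictly increasing while $t\mapsto\frac{2t}{1-t^2}$ is strictly increasing on $[0,1)$ with $\tan\bigl(2\arctan t\bigr)=\frac{2t}{1-t^2}$, the estimate \eqref{thebest} is equivalent to the pair of assertions
\[
\|X\|<1\qquad\text{and}\qquad \frac{2\|X\|}{1-\|X\|^2}\le\varkappa\bigl(D,d,\slv\bigr).
\]
The first of these is precisely the parenthetical bound $<\sqrt2/2$ recorded in \eqref{thebest}, and I expect to obtain it as a by-product of (or an easy preliminary to) the second. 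Thus the whole proof reduces to controlling the quantity $\dfrac{2\lambda}{1-\lambda^2}$ with $\lambda:=\|X\|=\max\spec|X|$.

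Next I would normalise the geometry. Shifting $A$ by the midpoint $\tfrac12(\gamma_\rl+\gamma_\rr)$ of the gap leaves $V$, $X$, and all the quantities occurring in Lemma \ref{Lcr} unchanged, so without loss of generality $\Delta=(-D/2,D/2)$ is centred at the origin. Hypothesis \ref{HypD} then translates into the concrete localisation bounds $\|A_0\|\le D/2-d$ on $\fA_0$ (because $\sigma_0\subset[-(D/2-d),\,D/2-d]$), $\|A_1w\|\ge(D/2)\|w\|$ for $w\in\Dom(A_1)$ (because the spectrum of $A_1$ avoids $(-D/2,D/2)$), and $\|B\|=\|B^*\|=\slv$. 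Moreover, by Theorem \ref{T-KMMa} the self-adjoint operator $\Lambda_0$ of \eqref{Lam0} has spectrum $\spec(\Lambda_0)=\spec(A_0+BX)=\omega_0\subset\Delta$, so that $\|\Lambda_0\|\le D/2$, with the finer bound $\|\Lambda_0\|\le D/2-(d-r_V)$ available from \eqref{incl}--\eqref{rV} should sharpness require it.

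The core of the argument is a scalar estimate extracted from Lemma \ref{Lcr}. Assume first that $\lambda$ is an eigenvalue of $|X|$ with unit eigenvector $u$, and put $v:=Uu$, a unit vector in $\fA_1\cap\Dom(A_1)$ by \eqref{Kisom} and the lemma. I would combine \eqref{Id1} and \eqref{Id2} so as to eliminate the cross term in favour of the factor $1-\lambda^2$ and to bring the dangerous quantity $s:=\|A_1v\|$ in with the effective coefficient $\lambda/(1-\lambda^2)$; the unboundedness of $A_1$ is then neutralised by completing the square in the $\fA_1$–direction,
\[
\frac{\lambda}{1-\lambda^2}\,s^2+\lal B^*u,A_1v\ral
=\frac{\lambda}{1-\lambda^2}\,\Bigl\|A_1v+\tfrac{1-\lambda^2}{2\lambda}B^*u\Bigr\|^2-\tfrac{1-\lambda^2}{4\lambda}\|B^*u\|^2,
\]
after which $s$ enters only through the manifestly controllable norm $\bigl\|A_1v+\tfrac{1-\lambda^2}{2\lambda}B^*u\bigr\|\ge D/2-\tfrac{1-\lambda^2}{2\lambda}\slv$. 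Inserting the localisation bounds above, using $\|A_0u\|\le D/2-d$ and $\|B^*u\|,\|Bv\|\le\slv$, replacing each cross term by its extreme value ($\pm(D/2-d)\slv$, respectively $\pm\slv s$), and minimising over the admissible range $s\ge D/2$, I expect to arrive at an inequality of the form $\frac{2\lambda}{1-\lambda^2}\le\varkappa(D,d,\slv)$. The two branches in the definition of $\varkappa$ should correspond exactly to the boundary-versus-interior dichotomy of this one-variable minimisation (equivalently, to whether the reverse–triangle lower bound on the completed square is active), and the threshold $\slv=\tfrac12\sqrt{d(D-2d)}$ is the value at which the two branches meet along $\lambda=\tan(\tfrac12\arctan\varkappa)$.

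I expect the genuinely delicate part to be precisely this scalar optimisation producing $\varkappa$ in closed form, including the verification that the extremal configurations are compatible with all constraints and that the two branches join continuously at the threshold; this is where the exact shape of $\varkappa$ is forced, and where the finer bound on $\|\Lambda_0\|$ may be needed. A secondary, more technical issue is the passage from eigenvalues to the full quantity $\|X\|=\sup\spec|X|$: when the supremum is not attained one argues by approximation, applying Lemma \ref{Lcr} along a Weyl sequence for $|X|$ and passing to the limit in the resulting scalar inequality, which is continuous and uniform in $\lambda$; here one must check that, although $\|A_1v\|$ may grow along such a sequence, it survives the limit only through the combinations retained above. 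Once $\|X\|<1$ and $\frac{2\|X\|}{1-\|X\|^2}\le\varkappa$ are established, \eqref{thebest} follows at once from \eqref{EEX} and the monotonicity of $\sin\circ\arctan$, and the coarser bound \eqref{edif1} of Theorem \ref{Th1} is recovered by discarding the refinement through $|\Delta|$.
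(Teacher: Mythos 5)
Your overall strategy coincides with the paper's: center the gap at the origin, take the angular operator $X$ from Theorem \Ref{T-KMMa}\,(ii), apply the identities \eqref{Id2} and \eqref{Id1} of Lemma \Ref{Lcr} to a unit eigenvector of $|X|$ at $\lambda=\|X\|$, extract a scalar bound on $\lambda/(1-\lambda^2)$, convert it via \eqref{EEX}, and treat the non-attained case by the approximation argument of \cite{AM2012}. The problem is that the decisive step --- the computation that actually produces the two-branch closed form of $\varkappa$ --- is never carried out: you write that you ``expect to arrive'' at the inequality $2\lambda/(1-\lambda^2)\le\varkappa$ and yourself flag this optimisation as ``the genuinely delicate part.'' Since everything in the theorem beyond Theorem \Ref{T-KMMa} and \eqref{EEX} is the exact value of $\varkappa$, this is a genuine gap rather than a routine verification. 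A smaller but real omission: $\lambda<1$ must be secured \emph{before} one divides by $1-\lambda^2$, and in the paper it is obtained by first showing, from \eqref{Id1} together with $\spec(\Lambda_0)=\omega_0\subset\Delta$ and $\|A_1Uu\|\ge a+d$, that the cross term $\lal A_0u,BUu\ral+\lal B^*u,A_1Uu\ral$ is strictly negative; in your plan it is deferred as a ``by-product.''

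The specific route you sketch for the key step also does not match how the optimisation actually resolves. The paper bounds each cross term separately by Cauchy--Schwarz, obtaining $\mu/(1-\mu^2)\le\varphi(x,y)$ with $\varphi(x,y)=({\slv}x+ay)/(x^2+y^2-a^2-{\slv}^2)$ on $\Omega=[a+d,\infty)\times[0,{\slv}]$, where $x=\|A_1Uu\|$, $y=\|BUu\|$ and $a=\tfrac12|\Delta|-d$. The supremum is \emph{always} attained at $x=a+d$, and the two branches of $\varkappa$ come from whether the maximiser in $y$ sits at the endpoint $y={\slv}$ or in the interior of $[0,{\slv}]$ --- not, as you predict, from a boundary-versus-interior dichotomy of a one-variable minimisation over $s=\|A_1Uu\|$ after completing the square. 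Your completion-of-the-square followed by the reverse triangle inequality couples $\|A_1Uu\|$ and $\lal B^*u,A_1Uu\ral$ in a different way, and while the extremal configuration (anti-parallel $A_1Uu$ and $B^*u$ with $\|A_1Uu\|=a+d$) is the same, you give no verification that your scheme reproduces exactly the stated constant; as presented, the proof is a plausible plan whose crucial calculation is missing.
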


As it was underlined in the Introduction section, our main goal
is to present a proof of Theorem \ref{T-MS} that does not
depend on the approach used in \cite{MotSel} and, in addition, is
simpler then the proof in \cite{MotSel}.

We first prove the bound \eqref{thebest} in a particular case where
the absolute value $|X|$ of the angular operator
$X\in\cB(\fA_0,\fA_1)$ from the graph representation
$\fL_0=\cG(X)$ has an eigenvalue equal to the
norm of $X$. The proof is done by making a straightforward estimate
of this eigenvalue. The general case is easily reduced to the above
particular case by using a quite common limit procedure, involving
orthogonal projections onto the elements of a sequence of expanding
finite-dimensional subspaces in  $\fA_0$ (see, e.g., the proof of
\cite[Theorem~4.1]{AM2012}; cf. the proof of
\cite[Theorem~4.2]{KMM1}). In view of its commonality, we skip this
part of the proof.

\medskip

\noindent \textit{Proof} of Theorem \ref{T-MS}. Throughout the
proof, we assume, without loss of generality, that the interval
$\Delta$ is located on $\bbR$ symmetrically with respect to the
origin, that is, $\gamma_{\rr}=-\gamma_{\rl}=\gamma$. Otherwise, one
may always make the replacement of $A_0$ and $A_1$ respectively by
$A'_0=A_0-c I$ and $A'_1=A_1-c I$ where
$c=(\gamma_{\rl}+\gamma_{\rr})/2$ denotes the center of the interval
$\Delta$. Obviously, such a replacement does not affect the property
of the operator $X$ to be the solution of the transformed equation
\eqref{Ric0}.

The assumptions
$\sigma_0\subset\Delta=(-\gamma,\gamma)$ and
$d=\dist(\sigma_0,\sigma_1)>0$ imply $\sigma_0\subset[-a,a]$ where
$a=\gamma-d$. At the same time
\begin{equation}
\label{A0a} \|A_0\|=a,
\end{equation}
$\gamma=a+d$, and the condition  \eqref{VMS} may be written as
\begin{equation}
\label{VMS1} \|V\|<\sqrt{d(2a+d)}.
\end{equation}

Let $X$ be the very same (unique) solution to \eqref{Ric0} that is spoken
about in the item (ii) of Theorem \ref{T-KMMa}. By our assumptions,
$V\neq 0$ (see Hypothesis \ref{HypL}) and, hence, $X\neq 0$. Suppose
that the absolute value $|X|$ of the operator $X$ has an eigenvalue
$\mu$ coinciding with the norm of $X$, i.e.,
\begin{equation}
\label{muX} \mu=\bigl\||X|\bigr\|=\|X\|>0,
\end{equation}
and let $u$, $\|u\|=1$, be an eigenvector of $|X|$ corresponding to
this eigenvalue, $|X|u=\mu u$. Since $\mu\neq 0$ and
$u=\frac{1}{\mu}|X|u$, one concludes that $u\in\Ran(|X|)$ and, due
to \eqref{Kisom},
\begin{equation}
\label{Uu} \|Uu\|=\|u\|=1,
\end{equation}
where $U$ is the isometry from the polar decomposition $X=U|X|$. By
Lemma \ref{Lcr} we also know that $Uu\in\Dom(A_1)$ and that the
following identities hold:
\begin{align}
\nonumber
\mu\bigl(\|A_1Uu\|^2+&\|BUu\|^2-\|A_0u\|^2-\|B^*u\|^2\bigr)\\
\label{Idl2}
&\,\,=-(1-\mu^2)\bigl(\lal A_0u,BUu\ral+\lal B^*u,A_1Uu\ral\bigr),\\
\label{Idl1} \lal A_0u,BUu\ral+\lal
B^*u,A_1Uu&\ral=-\mu\bigl(\|A_1Uu\|^2+ \|BUu\|^2-\|\Lambda_0
u\|^2\bigr),
\end{align}
where $\Lambda_0$ is a bounded self-adjoint operator on $\fA_0$
being expressed through $A_0$, $B$, and $X$ by~\eqref{Lam0}.

The similarity \eqref{Lam0} implies that
$\spec(\Lambda_0)=\spec(A_0+BX)$ and, hence, by Theorem \ref{T-KMMa}
(ii) we have $\spec(\Lambda_0)=\omega_0$. From the assertion (i) of
the same theorem it then follows that $\|\Lambda_0u\|\leq
\gamma-(d-r_V)<\gamma$, i.e.
\begin{equation}
\label{Lad} \|\Lambda_0u\|< a+d,
\end{equation}
Notice that the spectrum of $A_1$ lies outside the interval
$\Delta=(-a-d,a+d)$. Together with \eqref{Uu} this means
\begin{equation}
\label{Aad} \|A_1Uu\|\geq a+d.
\end{equation}
In view of \eqref{Lad} and \eqref{Aad} we find
\begin{equation*}
\|A_1Uu\|^2+ \|BUu\|^2-\|\Lambda_0 u\|^2>(a+d)^2+
\|BUu\|^2-(a+d)^2\geq 0.
\end{equation*}
Then the relations \eqref{muX} and \eqref{Idl1} require the strict
inequality
\begin{equation}
\label{Help} \lal A_0u,BUu\ral+\lal B^*u,A_1Uu\ral<0.
\end{equation}
On the other hand, because of  \eqref{A0a} and \eqref{VMS1} one has
\begin{align}
\nonumber
\|A_1Uu\|^2+\|BUu\|^2-\|A_0u\|^2-\|B^*u\|^2\geq & (a+d)^2-a^2-\|B\|^2\\
\label{Znam} &=d(2a+d)-{\slv}^2>0,
\end{align}
where, for shortness, we use the notation
\begin{equation*}
%%\label{vB}
{\slv}=\|B\|\quad(=\|V\|).
\end{equation*}
Due to \eqref{Help} and \eqref{Znam}, the identitity \eqref{Idl2}
yields
\begin{equation}
\label{ml1} \mu<1.
\end{equation}
Taking into account \eqref{Znam} and \eqref{ml1}, one can rewrite
\eqref{Idl2} in the form
\begin{equation*}
%%%%\label{Idm}
\frac{\mu}{1-\mu^2}=-\frac{\lal A_0u,BUu\ral+\lal
B^*u,A_1Uu\ral}{\|A_1Uu\|^2+\|BUu\|^2-\|A_0u\|^2-\|B^*u\|^2}\quad(>0)
\end{equation*}
and then conclude that
\begin{equation}
\label{muest}
\frac{\mu}{1-\mu^2}\leq\frac{a\|BUu\|+{\slv}\|A_1Uu\|}{\|A_1Uu\|^2+\|BUu\|^2-a^2-{\slv}^2}.
\end{equation}
Set
\begin{equation*}
x=\|A_1Uu\|\quad\text{and}\quad y=\|BUu\|.
\end{equation*}
In view of \eqref{Aad} we have $x\in[a+d,\infty)$. At the same time
$y\in[0,{\slv}]$. This means that, in any case, the following bound
holds:
\begin{equation}
\label{mufi}
 \frac{\mu}{1-\mu^2}\leq \sup\limits_{(x,y)\in\Omega} \varphi(x,y),
\end{equation}
where $\Omega=[a+d,\infty)\times[0,{\slv}]$ and
\begin{equation}
\label{phi} \varphi(x,y):=\frac{{\slv}x+ay}{x^2+y^2-a^2-{\slv}^2}.
\end{equation}
Elementary calculations show that the largest value of the function
$\varphi$ on the set $\Omega$ is reached at the piece of the
boundary of this set corresponding to $x=a+d$ and $y\in[0,{\slv}]$.
Namely, if $0< {\slv}\leq \sqrt{\frac{1}{2}da}$, then the maximum of
$\varphi$ on $\Omega$ is provided by the point $x=a+d$, $y={\slv}$.
If  $\sqrt{\frac{1}{2}da}< {\slv}<\sqrt{d(2a+d)}$, then the function
$\varphi$ achieves its maximmal value on $\Omega$ at $x=a+d$ and
$$
y=\frac{a[d(2a+d)-{\slv}^2]}{{\slv}(a+d)+a\sqrt{d(2a+d)(a^2+{\slv}^2)}}\,
<{\slv}.
$$
Substitution of the respective maximum point into \eqref{phi}
results in
\begin{equation}
\label{supf} \mbox{\small$\sup\limits_{(x,y)\in\Omega}
\varphi(x,y)=$}\left\{\begin{array}{cl}
\mbox{\small$\dfrac{{\slv}}{d}$} & \text{if \,} \mbox{\small${\slv}\leq \sqrt{\dfrac{1}{2}da}$},\\[4mm]
\mbox{\small$\dfrac{1}{2}\dfrac{{\slv}(a+d)+\sqrt{d(2a+d)}\sqrt{a^2+{\slv}^2}}{d(2a+d)-{\slv}^2}$}
& \text{if \,}\mbox{\small${\slv}>\sqrt{d(2a+d)}$}.
\end{array}
\right.
\end{equation}
Taking into account that
\begin{equation*}
%%%\label{mut}
\frac{\mu}{1-\mu^2}=\frac{1}{2}\tan (2\arctan\mu),
\end{equation*}
and recalling that $\mu=\|X\|$, $\slv=\|V\|$, and
$a=\frac{1}{2}|\Delta|-d$, from \eqref{ml1}, \eqref{mufi}, and
\eqref{supf} one derives that
\begin{equation}
\label{Xm} \|X\|\leq \tan\left(\frac{1}{2}\arctan
\varkappa(|\Delta|,d,\|V\|)\right).
\end{equation}
By Theorem \ref{T-KMMa} (ii), the spectral subspace
$\fL_0=\Ran\bigl(\sE_L(\omega_0)\bigr)$ is the graph of the operator
$X$. Hence, the norm of the difference of the orthogonal projections
$\sE_A(\sigma_0)$ and $\sE_L(\omega_0)$ is expressed through $\|X\|$
according to \eqref{EEX} (see, e.g., \cite[Corollary 3.4]{KMM2}). In
view of \eqref{EEX}, unequality \eqref{Xm} is equivalent to the
estimate \eqref{thebest}. Thus, for the case where the absolute
value $|X|$ has an eigenvalue coinciding with $\|X\|$, the proof is
complete.

In the case where $|X|$ does not have an eigenvalue equal to
$\|X\|$, the proof is reduced to the case already considered by
almost literally repeating the reasoning used in \cite{AM2012} to
prove the corresponding estimate for $\|\sE_{A}(\sigma_0)-
\sE_{L}(\omega_0)\|$ when $\sqrt{d(|\Delta| -
d)}\leq\|V\|<\sqrt{d|\Delta|}$ (see \cite[Theorem~4.1]{AM2012}).
Having done this reference to \cite{AM2012}, we consider the whole
proof of Theorem~\ref{T-MS} complete. \qed
\medskip

It only remains to recall (see the proof of \cite[Theorem
2]{MotSel}), that the assertion of Theorem~\ref{Th1} for $\|V\|<d$
is a simple corollary to the more detail estimate~\eqref{thebest}.
For the sake of completeness, we give, nevertheless, a necessary
explanation.

\begin{proof}[Proof of Theorem \ref{Th1} for $\|V\|<d$]
Since $|\Delta| \ge 2 d$ and $\|V\|<d$, inequality
$\|V\|<\sqrt{d(|\Delta|-d)}$ holds automatically. Consequently, by
Theorem \ref{T-MS} one has the bound \eqref{thebest}. Notice that
for fixed values of $\|V\|$ and $d$ satisfying $\|V\|<d$, the
quantity $\varkappa(D,d,\|V\|)$ represents a nonincreasing function
of the variable $D\in[2d,\infty)$. This function acquires its
maximum at $D=2d$, and the maximum equals
\begin{equation}
\nonumber \max\limits_{D: \,D\geq
2d}\varkappa(D,d,\|V\|)=\varkappa(2d,d,\|V\|=\dfrac{2\|V\|d}{d^2-\|V\|^2}=
\tan\Bigl( 2 \arctan \frac{\|V\|}{d} \Bigr).
\end{equation}
Thus, \eqref{thebest} yields \eqref{edif1}.
\end{proof}

\begin{remark}
Optimality of the bounds \eqref{edif1} for $\|V\|<d$ and
\eqref{thebest} for $\|V\|<\sqrt{d(|\Delta|-d)}$ is confirmed by the
respective concrete matrix examples (see \cite[Example~5.5 and
Remark~5.6]{MotSel} and \cite[Example 4.4 and Remark 4.5]{AM2012}).
\end{remark}

This research was supported by the Russian Foundation for Basic
Research and by the De\-u\-t\-sche Forschungsgemeinschaft (DFG). The
support by St.\,Petersburg State University (grant \#
11.38.241.2015) is also acknowledged.

%%%%%%%%%%%%%%%%%%%%%%%%%%%%%%%%%%%%%%%%%%%%%%%%%%%%%%%


\begin{thebibliography}{99}
%%%%%%%%%%%%%%%%%%%%%%%%%%%%%%%%%%%%%%%%%%%%%%%%%%%%%%%%

\bibitem{BDM1983}
R.\,Bhatia, C.\,Davis, and A.\,McIntosh, \textit{Perturbation of
spectral subspaces and solution of linear operator equations},
Linear Algebra Appl. \textbf{52/53}, 45--67 (1983).

\bibitem{DK70} C. Davis and W.\,M. Kahan,  \textit{The rotation
of eigenvectors by a perturbation. III},   SIAM J.  Numer. Anal.
{\bf 7}, 1--46 (1970).

\bibitem{K}
T.~Kato, \textit{Perturbation Theory for Linear Operators},
Springer--Verlag, Berlin, 1966.

\bibitem{AM2014}
S.\,Albeverio and A.\,K.\,Motovilov,  \textit{Bounds on variation of
the spectrum and spectral subspaces of a few-body Hamiltonian},
arXiv:1410.3231 (2014).

\bibitem{AM2013}
S.\,Albeverio and A.\,K.\,Motovilov, \textit{Sharpening the norm
bound in the subspace per\-tur\-ba\-ti\-on theory}, Compl. Analysis
Oper. Theory \textbf{7}, 1389--1416 (2013).

\bibitem{SeelPhD}
A.\,Seelman, \textit{Perturbation Theory for Spectral Subspaces},
Ph.D. thesis, Johannes Gutenberg University, Mainz, 2014 (available
online at http://ubm.opus.hbz-nrw.de/volltexte/2014/3862/).

\bibitem{YaF1988}
Y.\,A.\,Kuperin, K.\,A.\,Makarov, S.\,P.\,Merkuriev, and
A.\,K.\,Motovilov,  \textit{Algebraic theory of extensions in the
quantum few-body problem for particles with internal structure},
Sov. J.~Nucl.~Phys. \textbf{48} (1988), 224--231.

\bibitem{JMP1990}
Y.\,A.\,Kuperin, K.\,A.\,Makarov, S.\,P.\,Merkuriev,
A.\,K.\,Motovilov, and B.\,S.\,Pavlov, \textit{Extended Hilbert
space approach to few–body problems}, J. Math. Phys. \textbf{31},
1681--1690 (1990).

\bibitem{M-JMP1995}
A.\,K. Motovilov, \textit{Removal of the resolvent-like energy
dependence from interactions and invariant subspaces of a total
Hamiltonian}, J. Math. Phys. \textbf{36}, 6647--6664 (1995).

\bibitem{MotJMPh91}
A. K. Motovilov, \textit{The removal of an energy dependence from
the interaction in two--body systems}, J. Math. Phys. \textbf{32},
3509--3518 (1991).

\bibitem{McKellarMcCay}
B. H. J. McKellar and C. M. McKay, \textit{Formal scattering theory
for energy--dependent potentials}, Australian J. Phys. \textbf{36},
607--616 (1983).

\bibitem{MenShk}
R. Mennicken and A. A. Shkalikov, \textit{Spectral decomposition of
symmetric operator mat\-ri\-ces}, Math. Nachr. \textbf{179},
259--273 (1996).

\bibitem{MrMt}
A.\,S.\,Markus and V.\,I.\,Matsaev, \textit{Spectral theory of
holomorphic operator-functions in Hilbert space}, Funct. Anal. Appl.
\textbf{9}, 73--74  (1975).

\bibitem{LMMT}
H. Langer, A. Markus, V. Matsaev, and C. Tretter, \textit{A new
concept for block operator mat\-ri\-ces: The quadratic numerical
range}, Linear Algebra Appl. \textbf{330}, 89--112 (2001).

%% Proposition 2.5.22 (iii)
\bibitem{Tretter-Book}
C. Tretter, \textit{Spectral Theory of Block Operator Matrices and
Applications}, Im\-pe\-ri\-al College Press, London, 2008.

\bibitem{AMM}
S. Albeverio, K.\,A. Makarov, and A.\,K. Motovilov, \textit{Graph
subspaces and the spectral shift function}, Canad. J. Math. {\bf
55},  449--503 (2003).

\bibitem{KMM2}
V. Kostrykin, K.\,A. Makarov, and A.\,K. Motovilov,
\textit{Existence and uniqueness of solutions to the operator
Riccati equation. A geometric approach}, Contemp. Math. {\bf 327},
181--198 (2003).

\bibitem{KMM3}
V. Kostrykin, K.\,A. Makarov, and A.\,K. Motovilov, \textit{On the
existence of solutions to the operator Riccati equation and the
tan\,$\Theta$ theorem},  Integr. Equations Oper. Theory {\bf 51},
121--140 (2005).

\bibitem{MotSel}
A.\,K. Motovilov and A.\,V. Selin, \textit{Some sharp norm estimates
in the subspace perturbation problem}, Integr. Equations Oper.
Theory \textbf{56}, 511--542 (2006).

\bibitem{AM2012}
S.\,Albeverio and A.\,K.\,Motovilov, \textit{The a priori
{tan}\,\,$\Theta$ Theorem for spectral subspaces}, Integr. Equations
Oper. Theory, \textbf{73}, 413--430 (2012).

\bibitem{SeelNotes}
A.\,Seelman, \textit{Notes on the sin $2\Theta$ Theorem}, Integr.
Equations Oper. Theory \textbf{79}, 579--597 (2014).

\bibitem{Naka}
Y. Nakatsukasa, \textit{The tan $\theta$ theorem with relaxed
conditions}, Lin. Algebra Appl. \textbf{436}, 1528 –- 1534 (2012).

\bibitem{BirSol}
M.\,S.\ Birman and M.\,Z.\ Solomjak, \textit{Spectral Theory of
Self-Adjoint Operators in Hilbert Space}, D.\ Reidel Publishing,
Dordrecht, 1987.

\bibitem{KMM1}
V. Kostrykin, K.\,A. Makarov, and A.\,K. Motovilov, \textit{A
generalization of the $\tan 2\Theta$ Theorem}, Oper. Theory: Adv.
Appl. {\bf 149}, 349--372 (2004).


\end{thebibliography}
\end{document}